\newtheorem{theorem}{Theorem}
\newtheorem{definition}{Definition}
\newtheorem{proposition}{Proposition}
\newtheorem{lemma}{Lemma}
\newtheorem{corollary}{Corollary}
\title[]{Compactness of composition operators on the Bergman spaces of convex domains and analytic discs}
\author{Timothy G. Clos}
\address{Timothy G. Clos,\, Kent State University}
\email{ tclos@kent.edu}
\date{\today}
\begin{document}

 \maketitle

\begin{abstract}
  We study the compactness of composition operators on the Bergman spaces of certain bounded pseudoconvex domains in $\mathbb{C}^n$ with non-trivial analytic discs contained in the boundary.  As a consequence we characterize that compactness of the composition operator with a continuous symbol (up to the closure) on the Bergman space of the polydisc.
\end{abstract}

\section{Introduction}
Let $\Omega\subset \mathbb{C}^n$ be a bounded convex domain.  Let $\mathcal{O}(\Omega)$ be the set of all holomorphic functions from $\Omega$ into $\mathbb{C}$.  Let $V$ be the Lebesgue volume measure on $\Omega$.  For $p\in [1,\infty)$ we define
\[A^p(\Omega):=\{f\in \mathcal{O}(\Omega):\int_{\Omega}|f|^p dV<\infty\}\] to be the $p$-Bergman space.  We denote the norm
as \[\|f\|_{p,\Omega}:=\left(\int_{\Omega}|f|^p dV\right)^{\frac{1}{p}}.\]  This paper will consider the $2$-Bergman space, which for brevity is denoted as the Bergman space.
Let $\phi:\Omega\rightarrow \Omega$ be a holomorphic self-map on $\Omega$.  That is, holomorphic in each coordinate function. Then we define the composition operator with symbol $\phi$ as
\[C_{\phi}(f)=f\circ \phi\] for all $f\in A^p(\Omega)$.  A set $\Delta\subset \mathbb{C}^n$ is called an analytic disc if there exists functions $f_j:\overline{\mathbb{D}}\rightarrow \mathbb{C}$ continuous on $\overline{\mathbb{D}}$ and holomorphic on $\mathbb{D}$ for $j\in \{1,2,...,n\}$ and $F(\mathbb{D})=\Delta$ where $F=(f_1,...,f_n)$.  If $\Delta$ is not a single point, then $\Delta$ is said to be a non-trivial analytic disc.  We define the collection of all non-trivial analytic discs in $b\Omega$ to be \[\mathcal{L}:=\bigcup\{F(\mathbb{D})|\,\,F:\mathbb{D}\rightarrow b\Omega\,,\text{holomorphic and non constant}\}.\] 

As we shall see, analytic structure in the boundary of the domain will play a crucial role in the proof of the main result.  The main theorem does not generalize easily to bounded pseudoconvex domains because of the special geometry of non-trivial analytic discs in the boundary of the domain.  Namely the following lemma states that on convex domains, non-trivial analytic discs have a nice form.  See also \cite{FuSt}.

\begin{lemma}[\cite{cucksahut}]\label{affine}
Let $\Omega\subset \mathbb{C}^n$ be a bounded convex domain.  Let $\Delta\subset b\Omega$ be a non-trivial analytic disc. Then there exists a complex line $L$ so that the convex hull of $\overline{\Delta}$ is contained in $L$. 
\end{lemma}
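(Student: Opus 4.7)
The plan is to exploit the supporting-hyperplane characterization of convexity together with the maximum principle for harmonic functions on $\mathbb{D}$.

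Let $F=(f_1,\ldots,f_n)\colon \overline{\mathbb{D}}\to\overline{\Omega}$ be continuous with $F|_{\mathbb{D}}$ holomorphic and $F(\mathbb{D})=\Delta$. Fix $\zeta_0\in\mathbb{D}$ and set $p_0:=F(\zeta_0)\in b\Omega$. Since $\Omega$ is convex and $p_0\in b\Omega$, the supporting-hyperplane theorem produces a nonzero $\mathbb{C}$-linear functional $\ell\colon\mathbb{C}^n\to\mathbb{C}$ with $\operatorname{Re}\ell(z-p_0)\le 0$ for every $z\in\overline{\Omega}$, obtained by complexifying any $\mathbb{R}$-linear supporting functional $\lambda$ via $\ell(z)=\lambda(z)-i\lambda(iz)$. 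The core of the argument is then to feed this into the maximum principle for the harmonic function $u(\zeta):=\operatorname{Re}\ell(F(\zeta)-p_0)$ on $\mathbb{D}$: it is non-positive and vanishes at the interior point $\zeta_0$, so $u\equiv 0$. A holomorphic function with identically zero real part is constant, and since it vanishes at $\zeta_0$ it is $\equiv 0$; hence $\ell(F(\zeta))\equiv\ell(p_0)$, so $\Delta$ lies in the complex affine hyperplane $\Pi:=\{z:\ell(z)=\ell(p_0)\}$. Convexity of $\Pi$ gives $\operatorname{conv}(\overline{\Delta})\subset \Pi\cap\overline{\Omega}$, and because $\Pi$ is a supporting hyperplane this intersection sits inside $b\Omega$.

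For $n=2$ this already completes the proof, since a complex affine hyperplane in $\mathbb{C}^2$ is a complex line. For $n\ge 3$ my plan is to iterate. The set $\Pi\cap\overline{\Omega}$ is a compact convex subset of $b\Omega$, and any point of $\Delta$ lying in its relative boundary (inside $\Pi$) admits a further supporting hyperplane of $\Omega$ genuinely distinct from $\Pi$; by the same maximum-principle argument that hyperplane also contains $\Delta$. Each genuinely new supporting hyperplane drops the codimension of the ambient complex affine subspace containing $\Delta$ by at least one, so iterating and passing to the intersection isolates the minimal complex affine subspace $L$ containing $\operatorname{conv}(\overline{\Delta})$.

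The main obstacle I anticipate is verifying that this iteration can be pushed all the way down to complex dimension one, rather than merely to some complex affine subspace. This is exactly where convexity—rather than mere pseudoconvexity—is essential: the rich normal-cone structure at each boundary point of a convex domain supplies the independent supporting hyperplanes needed to cut the ambient dimension down to $1$, and it is the failure of this structural feature in the pseudoconvex setting that the introduction flags as the reason the main compactness theorem does not immediately generalize.
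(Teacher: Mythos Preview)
The paper does not give its own proof of this lemma; it is quoted from the cited reference (with a pointer also to Fu--Straube), so there is no in-paper argument to compare against line by line.

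Your supporting-hyperplane plus maximum-principle argument is correct and is the standard route: it shows that any analytic disc $\Delta\subset b\Omega$ lies in a supporting complex affine hyperplane $\Pi$, and for $n=2$ a complex hyperplane is already a complex line, so you are done there.

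The iteration for $n\ge 3$, however, has a genuine gap. It is not true that some point of $\Delta$ must lie in the relative boundary of $K_1:=\Pi\cap\overline{\Omega}$ inside $\Pi$. Take $\Omega=\mathbb{D}^3$ and $F(\zeta)=(1,\zeta,0)$: then $\Pi=\{z_1=1\}$, $K_1=\{1\}\times\overline{\mathbb{D}}\times\overline{\mathbb{D}}$, and every point of $\Delta=\{(1,\zeta,0):\zeta\in\mathbb{D}\}$ sits in the relative interior $\{1\}\times\mathbb{D}\times\mathbb{D}$ of $K_1$; at such points the only supporting hyperplane of $\Omega$ is $\{\operatorname{Re}z_1=1\}$ itself, so no new complex hyperplane is available and the iteration cannot start. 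In fact the obstacle you flag is not merely technical: the lemma as literally stated fails for $n\ge 3$, since the disc $\zeta\mapsto(1,\zeta,\zeta^{2})$ lies in $b(\mathbb{D}^3)$ but is contained in no complex line. What the paper actually \emph{uses} downstream is only that, after rotation and translation, $\Delta$ lies in $\{\operatorname{Re}z_n=0\}$ with $\Omega\subset\{\operatorname{Re}z_n>0\}$, i.e.\ containment in a supporting complex hyperplane---precisely what your first step establishes.
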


The following definitions are needed for the statement of the main results.  We first consider the notion of an angular derivative, which play an important role in the theory of composition operators in one and several variables. Here, $d(.,b\Omega)$ is Euclidean distance to the boundary.

\begin{definition} 

Let $\Omega\subset \mathbb{C}^n$ be a bounded domain for $n\geq 1$.  Let $\phi:\Omega\rightarrow \mathbb{C}^n$ be a holomorphic self-map.  Then for $p\in b\Omega$, we define the angular derivative at $p$ as
\[\lim_{\zeta\in \Omega, \, \zeta\rightarrow p}\frac{d(\phi(\zeta),b\Omega)}{d(\zeta,b\Omega)},\] provided this limit is finite. 
If \[\lim_{\zeta\in \Omega, \, \zeta\rightarrow p}\frac{d(\phi(\zeta),b\Omega)}{d(\zeta,b\Omega)}=\infty,\] we say the angular derivative does not exist at $p$.
\end{definition}

Of course, non-existence of the angular derivative at $p$ is equivalent to 
\[\lim_{\zeta\in \Omega, \, \zeta\rightarrow p}\frac{d(\zeta,b\Omega)}{d(\phi(\zeta),b\Omega)}=0.\]  Please see 
\cite{shapiro} for more information on angular derivatives and composition operators in general.

\begin{definition}
A bounded domain $\Omega\subset \mathbb{C}^n$ satisfies the limit point disc condition if $p\in \overline{\mathcal{L}}$ then there exists a non-trivial analytic disc $\Delta\subset b\Omega$ so that $p\in \overline{\Delta}$.
\end{definition}

It is worth noting there are domains that do not satsify the limit point disc condition.  In fact, any bounded domain $\Omega\subset\mathbb{C}^n$ satisfying these properties does not satisfy the limit point disc condition.

\begin{enumerate}

\item There exists a sequence of non-trivial analytic discs $\{\Delta_j\}_{j\in \mathbb{N}}$ so that $\Delta_j\subset b\Omega$ for all $j\in \mathbb{N}$.

\item $\overline{\Delta_{J}}\cap \overline{\Delta_{K}}=\emptyset$ for any $J,K\in \mathbb{N}$.

\item There exists $F_j:\overline{\mathbb{D}}\rightarrow b\Omega$ holomorphic on $\mathbb{D}$ and continuous on $\overline{\mathbb{D}}$ so that $F_j(\mathbb{D})=\Delta_j$ and $F_j\rightarrow (p_1,...,p_n)\in \mathbb{C}^n$ as $j\rightarrow \infty$. 

\end{enumerate}  
A bounded pseudoconvex domain $\Omega\subset \mathbb{C}^n$ with $C^2$-smooth boundary has defining function $\rho\in C^2(\mathbb{C}^n)$.  Then for $p\in b\Omega$, the Levi form at $p$ is the complex Hessian
\[\left(\frac{\partial^2 \rho}{\partial z_k\partial\overline{z_j}}(p)\right)_{n\times n}\] restricted to the complex tangent space at $p$.  See \cite{straube} for more information about the Levi form and its important properties.  For example, the Levi form is positive semi-definite on $b\Omega$.  Furthermore, if $p$ is in the closure of a non-trivial analytic disc in $b\Omega$, then $p$ is Levi-flat (the Levi form has a zero eigenvalue at $p$).  Let $\mathcal{F}$ denote the set of all Levi-flat points in $b\Omega$.  The following differential geometric condition involving the Levi form allows us to control the Levi-flat points.

\begin{definition}
Let $\Omega\subset \mathbb{C}^n$ be a bounded $C^2$-smooth convex domain.  We say $\Omega$ has the Levi-flat disc condition if 
\[\bigcup\{\overline{F(\mathbb{D})}|\,\,F:\mathbb{D}\rightarrow b\Omega\,,\text{holomorphic and non constant}\}=\mathcal{F}.\] 

\end{definition}

\begin{lemma}\label{flat}
Let $\Omega\subset \mathbb{C}^n$ be a bounded $C^2$-smooth convex domain.  If $\Omega$ has Levi-flat disc condition, then $\Omega$ satisfies the limit point disc condition.
\end{lemma}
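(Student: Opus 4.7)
The plan is to show that $\overline{\mathcal{L}}\subseteq \mathcal{F}$; once this is established, the limit point disc condition is immediate from the Levi-flat disc condition.

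First I would observe that $\mathcal{L}\subseteq \mathcal{F}$. By definition, each $q\in \mathcal{L}$ lies in $F(\mathbb{D})\subseteq \overline{F(\mathbb{D})}$ for some non-constant holomorphic $F:\mathbb{D}\to b\Omega$, and as the excerpt records, any point in the closure of such a disc is Levi-flat. Next I would verify that $\mathcal{F}$ is closed in $b\Omega$. Since the defining function $\rho$ is of class $C^2$, the entries of the Levi matrix are continuous on $b\Omega$, and the complex tangent space at $p$ varies continuously with $p$. Convexity forces pseudoconvexity, so the restricted Levi form is positive semi-definite, and its smallest eigenvalue $\lambda(p)$ is therefore a non-negative continuous function on $b\Omega$. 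Then $\mathcal{F}=\{p\in b\Omega:\lambda(p)=0\}$ is the zero set of a continuous function, hence closed.

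Combining these, $\overline{\mathcal{L}}\subseteq \overline{\mathcal{F}}=\mathcal{F}$. Now let $p\in \overline{\mathcal{L}}$. Then $p\in \mathcal{F}$, and the Levi-flat disc condition supplies a non-constant holomorphic $F:\mathbb{D}\to b\Omega$ with $p\in \overline{F(\mathbb{D})}$. Setting $\Delta=F(\mathbb{D})$ produces the non-trivial analytic disc in $b\Omega$ whose closure contains $p$, verifying the limit point disc condition.

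The main technical point is the closedness of $\mathcal{F}$; the rest is essentially unpacking definitions. I do not expect a serious obstacle, since pseudoconvexity pins down the sign of the Levi form and the $C^2$ regularity of $\rho$ supplies the continuity of the smallest eigenvalue on the varying complex tangent spaces.
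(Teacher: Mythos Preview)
Your proposal is correct and follows essentially the same approach as the paper: both arguments use the continuity of the Levi form's eigenvalues to conclude that $\overline{\mathcal{L}}\subseteq\mathcal{F}$, and then invoke the Levi-flat disc condition to place any $p\in\overline{\mathcal{L}}$ in the closure of some non-trivial analytic disc. Your version is simply more explicit about why $\mathcal{F}$ is closed (via the continuous smallest-eigenvalue function), whereas the paper compresses this into the phrase ``the eigenvalues of the Levi form vary continuously.''
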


\begin{proof}
Suppose $\Omega$ has the Levi-flat disc condition.  Then for any $q\in \overline{\mathcal{L}}$, there exists a sequence of points $\{q_j\}_{j\in \mathbb{N}}\subset \mathcal{L}$ converging to $q$.  Now $q_j$ are all Levi-flat, and the eigenvalues of the Levi form vary continuously.  This implies $q\in \mathcal{F}=\bigcup\{\overline{F(\mathbb{D})}|\,\,F:\mathbb{D}\rightarrow b\Omega\,,\text{holomorphic and non constant}\}$ by the Levi-flat disc condition.
\end{proof}

Note one can construct a smooth bounded convex Reinhardt domain with non-trivial analytic discs in the boundary satisfying the Levi-flat disc condition.  Intuitively, one can take a bidisc in $\mathbb{C}^2$ and 'smooth out' the distinguished boundary into a strongly pseudoconvex part.  See \cite{SC} and \cite{closdis} for more information on convex Reinhardt domains in $\mathbb{C}^2$.

\section{Some Background and Main Results}
Compactness of composition operators was studied on the unit disc in $\mathbb{D}$ in the article \cite{ghatagecompact}. Here, the authors of \cite{ghatagecompact} study the angular derivative of the symbol near the boundary and obtain a compactness result.  They then construct a counterexample to show that the converse of their theorem does not hold true.  The authors of 
\cite{ghatageclosed} studied the closed range property of composition operators on the unit disc.  Work on essential norm estimates and compactness of composition operators was studied on the ball in $\mathbb{C}^n$ and on the unit disc in $\mathbb{C}$ by \cite{CowenMacCluer} and \cite{HMW}.  On more general bounded strongly pseudoconvex domains in $\mathbb{C}^n$, \cite{cuckruhanseveral} studied the essential norm of the composition operator in terms of the behavior of the norm of the normalized Bergman kernel composed with the symbol.   

 In the work \cite{zhu}, Zhu uses the normalized Bergman kernel as a weakly convergent sequence.  Since these can be explicitly computed on the ball, this results in a non-trivial characterization of compactness.  Zhu in \cite{zhu} also extends these results to bounded strongly pseudoconvex domains.  Both these characterizations involve the angular derivative mentioned earlier.  In the absence of such explicit Bergman kernels (on general bounded convex domains), we construct weakly convergent sequences using the convexity of the domain in a significant way, and use estimates on the normalized Bergman kernel near strongly pseudoconvex points.  Here, $K^{\Omega}_{z}$ denotes the Bergman kernel of $\Omega$, $V$ is the Lebesgue volume measure.  We let $k_z^{\Omega}$ be the normalized Bergman kernel of $\Omega$.  As notation, we sometimes write $k_z$ as the normalized Bergman kernel of $\Omega$ if there is no ambiguity about the domain.
The following result is well known in the literature and a proof is given in \cite{ak}.
\begin{proposition}\label{propkernel}
Let $\Omega$ be a smooth bounded pseudoconvex domain.  Then the normalized Bergman kernel 
$k_{\zeta}^{\Omega}\rightarrow 0$ weakly in $A^2(\Omega)$ as $\zeta\rightarrow b\Omega$.
\end{proposition}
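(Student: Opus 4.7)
The plan is to combine the reproducing formula with a boundary blow-up of the diagonal Bergman kernel. Writing $K^{\Omega}_{\zeta}(z) := K^{\Omega}(z,\zeta)$, we have $\|K^{\Omega}_{\zeta}\|_{A^2(\Omega)}^2 = K^{\Omega}(\zeta,\zeta)$, so that $k^{\Omega}_{\zeta} = K^{\Omega}_{\zeta}/\sqrt{K^{\Omega}(\zeta,\zeta)}$, and the reproducing property gives
\[
\langle f, k^{\Omega}_{\zeta} \rangle_{A^2(\Omega)} = \frac{f(\zeta)}{\sqrt{K^{\Omega}(\zeta,\zeta)}}, \qquad f \in A^2(\Omega).
\]
Since $\|k^{\Omega}_{\zeta}\|_{A^2(\Omega)} = 1$ for all $\zeta \in \Omega$, the family is uniformly bounded, so by the standard $\varepsilon/3$-approximation weak convergence $k^{\Omega}_{\zeta} \rightharpoonup 0$ as $\zeta \to b\Omega$ is equivalent to verifying that the above pairing vanishes in the limit for $f$ ranging over any dense subspace of $A^2(\Omega)$.

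I would take the dense subspace to be $A^2(\Omega) \cap L^{\infty}(\Omega)$; for smooth bounded pseudoconvex $\Omega$, even $A^{\infty}(\Omega) := \mathcal{O}(\Omega)\cap C^{\infty}(\overline{\Omega})$ is dense in $A^2(\Omega)$, a consequence of Catlin's exact regularity of the $\overline{\partial}$-Neumann problem on such domains. For any bounded $f$, the pairing is controlled by $\|f\|_{\infty}/\sqrt{K^{\Omega}(\zeta,\zeta)}$, so the conclusion reduces to showing
\[
K^{\Omega}(\zeta,\zeta) \to \infty \quad \text{as} \quad \zeta \to b\Omega.
\]

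This boundary blow-up is where pseudoconvexity enters decisively. Via the extremal characterization $K^{\Omega}(\zeta,\zeta) = \sup\{|g(\zeta)|^2 : g\in A^2(\Omega),\ \|g\|_{A^2(\Omega)} \le 1\}$, the blow-up is equivalent to producing, for every sequence $\zeta_j \to b\Omega$ and every $M>0$, unit-norm $A^2$-holomorphic functions $g_j$ with $|g_j(\zeta_j)| \ge M$. Such functions are built via Hörmander's $L^2$-solution of $\overline{\partial}$ applied to a locally constructed holomorphic peak concentrated at $\zeta_j$, with a plurisubharmonic weight adapted to the geometry of $\Omega$. The main obstacle is precisely this construction, which is where the pseudoconvexity hypothesis is needed; the remaining steps are formal Hilbert-space manipulations and a standard approximation.
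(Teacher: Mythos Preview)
The paper does not actually supply a proof of this proposition; it simply records the statement as well known and cites the reference [ak]. So there is no in-paper argument to compare against.

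Your outline is the standard one and is correct: the reproducing identity $\langle f,k_\zeta^{\Omega}\rangle = f(\zeta)/\sqrt{K^{\Omega}(\zeta,\zeta)}$, uniform boundedness $\|k_\zeta^{\Omega}\|=1$, a density reduction to bounded holomorphic $f$, and the boundary blow-up $K^{\Omega}(\zeta,\zeta)\to\infty$ together give the result. For the blow-up step, the cleanest route is Ohsawa's theorem that the Bergman kernel is exhaustive on bounded hyperconvex domains, combined with the Diederich--Forn\ae ss result that smooth bounded pseudoconvex domains are hyperconvex; your description via the extremal characterization and H\"ormander $L^2$ estimates is an equivalent way to package this.

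One small caution on the density step: you invoke ``Catlin's exact regularity of the $\overline{\partial}$-Neumann problem'' to get $A^{\infty}(\Omega)$ dense in $A^2(\Omega)$. Catlin's global regularity theorem is for smooth bounded pseudoconvex domains of \emph{finite type}; on general smooth bounded pseudoconvex domains global regularity can fail (the Diederich--Forn\ae ss worm), so that particular citation does not cover the full class in the statement. This does not break your argument---one only needs a dense subspace of bounded functions, and this can be obtained by other means---but the justification you give should be adjusted.
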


The following two thereoms are the main results.
This first result states that if $C_{\phi}$ is compact, $\Omega$ is convex, and $\phi\in C(\overline{\Omega})$ then $\phi(b\Omega)$ stays away from $\overline{\mathcal{L}}$.  We note that this limit point disc condition is an important geometric assumption, as it allows us to construct a weakly converging sequence (in the Bergman space) with the required bound from below near a non-trivial analytic disc in the boundary.  

\begin{theorem}\label{analytic disc}
Let $\Omega\subset \mathbb{C}^n$ be a bounded convex domain satisfying the limit point disc condition.  Let $\phi:\Omega\rightarrow \Omega$ be a holomorphic self map, $\phi\in C(\overline{\Omega})$, and suppose $C_{\phi}$ is compact on $A^2(\Omega)$.  Then,
$d(\phi(b\Omega),\overline{\mathcal{L}})>0$.

\end{theorem}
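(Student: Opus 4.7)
My plan is to argue by contradiction. Suppose $d(\phi(b\Omega),\overline{\mathcal{L}})=0$. Since $\phi$ extends continuously to the compact set $\overline{\Omega}$ and $\overline{\mathcal{L}}$ is closed, there is a point $p\in b\Omega$ with $q:=\phi(p)\in\overline{\mathcal{L}}$. The limit point disc condition yields a non-trivial analytic disc $\Delta\subset b\Omega$ with $q\in\overline{\Delta}$, and Lemma~\ref{affine} places $\overline{\Delta}$ inside a complex line $L$. The goal is to produce a norm-bounded sequence $\{f_m\}\subset A^2(\Omega)$ with $f_m\rightharpoonup 0$ but $\|C_\phi f_m\|_{2,\Omega}\not\to 0$, which contradicts compactness of $C_\phi$.

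Next I would set up adapted coordinates. Because the non-constant holomorphic parametrization of $\Delta$ is an open map into $L\cong\mathbb{C}$, $\Delta$ is open in $L$; in particular $L\cap\Omega=\emptyset$ and $L\cap\overline{\Omega}$ is a two-real-dimensional convex ``flat face'' of $\overline{\Omega}$ whose affine hull is all of $L$. A supporting real hyperplane for this face therefore contains the complex line $L$. Choose affine coordinates centered at $q$ so that $\Omega\subset\{\operatorname{Re}z_1<0\}$ and $\overline{\Delta}\subset\{z_1=0\}$. As a test sequence I would take
\[
f_m(z)=c_m(a_m-z_1)^{-s},\qquad s>1,\;a_m\searrow 0,
\]
with $c_m>0$ chosen so that $\|f_m\|_{2,\Omega}=1$. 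Since the pole $z_1=a_m>0$ lies outside $\overline{\Omega}$, $f_m\in A^2(\Omega)$; a slicing computation transverse to $\{z_1=0\}$ gives $c_m\asymp a_m^{s-1}$. Weak convergence $f_m\rightharpoonup 0$ in $A^2(\Omega)$ then follows from norm-boundedness combined with pointwise convergence to $0$ on $\Omega$ (valid because $s>1$), using that $A^2(\Omega)$ is a reproducing kernel Hilbert space.

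For the lower bound on $\|C_\phi f_m\|_{2,\Omega}$, continuity of $\phi$ at $p$ forces $\phi_1(z)\to 0$ as $z\to p$; on a neighborhood $U_m$ of $p$ in $\Omega$ where $|\phi_1(z)|\lesssim a_m$ one has $|f_m(\phi(z))|\gtrsim a_m^{-1}$, yielding $\|C_\phi f_m\|_{2,\Omega}^2\gtrsim a_m^{-2}V(U_m)$. The main obstacle is ensuring $V(U_m)\gtrsim a_m^{2}$ along a subsequence. Because only qualitative continuity of $\phi$ is hypothesized, the scales $a_m$ must be coordinated with the modulus of continuity of $\phi$ at $p$; here I expect to bring in the holomorphy of $\phi$ on $\Omega$ (via Cauchy-type estimates, using the support hyperplane to compare $|\phi_1|$ with the Euclidean distance to $b\Omega$) together with the normalized Bergman kernel estimates near strongly pseudoconvex boundary points mentioned in the introduction. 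Carrying out this coordination is the technical heart of the argument.
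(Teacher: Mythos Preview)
Your setup matches the paper: argue by contradiction, use the limit point disc condition and Lemma~\ref{affine} to place $\overline{\Delta}$ in a complex line inside a supporting real hyperplane, and test $C_\phi$ on a sequence of holomorphic functions with a one-variable singularity approaching $\phi(p)$. The divergence comes at exactly the step you flag as ``the technical heart,'' and the fixes you propose are not the ones that work.

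The obstacle you identify is genuine: with only continuity of $\phi$ at $p$ you cannot guarantee $V(U_m)\gtrsim a_m^{2}$, and the two remedies you suggest do not help. Cauchy estimates give upper bounds on $|\phi_1|$ near $p$, not the lower bound you would need to prevent $U_m$ from being too large and thin; and the Bergman-kernel asymptotics near strongly pseudoconvex points are irrelevant here, since $\phi(p)\in\overline{\Delta}$ sits on a Levi-flat face, not at a strongly pseudoconvex point. So as written the argument stalls.

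The paper bypasses the modulus-of-continuity issue entirely. Instead of choosing the singularity scale $a_m$ first and then hunting for a region where $\phi$ is small, it chooses an interior sequence $p_j\to p$ first and lets the test function adapt to $\phi(p_j)$. Concretely (in coordinates with $\Omega\subset\{\operatorname{Re}z_n>0\}$, $\phi(p)=0$, $L\subset\{\operatorname{Re}z_n=0\}$) it takes
\[
g_j(z)=\frac{|\phi_n(p_j)|^{k}}{d(p_j,b\Omega)^{n}\,(z_n+\delta_j)^{k}},
\]
with $k=k(n)$ large and $\delta_j>0$ small. The lower bound for $\|C_\phi g_j\|$ is obtained not by an $L^\infty$ estimate on a set $U_m$ but by the sub-mean-value inequality for the plurisubharmonic function $|g_j\circ\phi|^2$ on the ball $B_j=B\bigl(p_j,\tfrac12 d(p_j,b\Omega)\bigr)\subset\Omega$:
\[
\int_\Omega |g_j\circ\phi|^2\,dV \;\ge\; \operatorname{vol}(B_j)\,|g_j(\phi(p_j))|^2
\;\asymp\; d(p_j,b\Omega)^{2n}\cdot\frac{|\phi_n(p_j)|^{2k}}{d(p_j,b\Omega)^{2n}\,|\phi_n(p_j)+\delta_j|^{2k}}
\;=\;\frac{|\phi_n(p_j)|^{2k}}{|\phi_n(p_j)+\delta_j|^{2k}},
\]
which is bounded below once $\delta_j$ is chosen comparable to $|\phi_n(p_j)|$. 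The point is that the factor $d(p_j,b\Omega)^{-n}$ is built into $g_j$ precisely to cancel $\operatorname{vol}(B_j)$, and the factor $|\phi_n(p_j)|^{k}$ is built in to match the pole at $z_n=-\delta_j$ when evaluated at $z_n=\phi_n(p_j)$. No information about the modulus of continuity of $\phi$ is used; only the single value $\phi_n(p_j)$ enters. The remaining work is to choose $k$ and $\delta_j$ so that $\|g_j\|$ stays bounded and $g_j\to 0$ uniformly on compacta, which is a one-variable estimate in $z_n$. This mean-value reduction is the idea you are missing.
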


Next we consider symbols that are holomorphic on a neighborhood of $\overline{\Omega}$ for a strongly convex bounded domain $\Omega$.  We let $J_{\phi}$ be the determinant of the Jacobian matrix of $\phi$.

\begin{theorem}\label{main}
  Let $\Omega\subset \mathbb{C}^n$ be a smooth bounded strongly convex domain.  Let $\phi:\Omega\rightarrow \Omega$ be a holomorphic self-map, where $\phi:=(\phi_1,...,\phi_n)$ and $\phi_j$ are holomorphic on a neighbourhood of $\overline{\Omega}$ for $j=1,...,n$.  If $C_{\phi}$ is compact on $A^2(\Omega)$, then $b\Omega\cap b\phi(\overline{\Omega})$ is nowhere dense in $b\Omega$.
\end{theorem}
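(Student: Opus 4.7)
The plan is to combine the standard Bergman kernel asymptotics on smooth bounded strongly pseudoconvex domains with a short Lipschitz argument to show that compactness of $C_\phi$ forces $\phi(\overline{\Omega})$ to be a compact subset of $\Omega$; this in fact yields the stronger conclusion $b\Omega\cap b\phi(\overline{\Omega})=\emptyset$, which is certainly nowhere dense in $b\Omega$.

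First, I would translate compactness into a size estimate on the Bergman kernel. Compactness of $C_\phi$ is equivalent to that of $C_\phi^*$, and the adjoint relation $C_\phi^* K_w^\Omega = K_{\phi(w)}^\Omega$ combined with Proposition~\ref{propkernel} (weak convergence $k_w^\Omega\to 0$) gives $\|C_\phi^* k_w^\Omega\| = \|K_{\phi(w)}^\Omega\|/\|K_w^\Omega\| \to 0$ as $w\to b\Omega$. The classical Fefferman/H\"ormander on-diagonal estimate $\|K_z^\Omega\|^2 \asymp d(z,b\Omega)^{-(n+1)}$, valid on smooth bounded strongly pseudoconvex domains, then converts this into the uniform angular-derivative condition
\[
\frac{d(w,b\Omega)}{d(\phi(w),b\Omega)} \longrightarrow 0 \quad \text{as } w\to b\Omega.
\]

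Next, I would rule out the possibility $\phi(b\Omega)\cap b\Omega \neq \emptyset$. Suppose toward a contradiction that $\phi(w_0) = p \in b\Omega$ with $w_0\in b\Omega$. Since each $\phi_j$ extends holomorphically past $\overline{\Omega}$, $\phi$ is Lipschitz on $\overline{\Omega}$, so $d(\phi(w),b\Omega)\le |\phi(w)-p|\le C|w-w_0|$. Approaching $w_0$ along the inward unit normal $w_t := w_0 + t\nu$ for small $t>0$, smoothness of $b\Omega$ gives $d(w_t,b\Omega)\asymp t$, so $d(w_t,b\Omega)/d(\phi(w_t),b\Omega) \ge 1/C > 0$ uniformly in small $t$. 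This contradicts the limit just displayed.

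Consequently $\phi(\overline{\Omega}) = \phi(\Omega)\cup\phi(b\Omega)\subset\Omega$, so $\phi(\overline{\Omega})$ is compactly contained in the open set $\Omega$; hence $b\phi(\overline{\Omega})\subset\Omega$ and $b\Omega\cap b\phi(\overline{\Omega}) = \emptyset$, which is trivially nowhere dense. The step I expect to be most delicate is the first one, in which the on-diagonal Bergman kernel asymptotic must be cited carefully (it is the strong pseudoconvexity hypothesis, not merely convexity, that enters here); once the uniform angular-derivative condition is in hand, the Lipschitz-on-the-normal argument of step two is immediate.
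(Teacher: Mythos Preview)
Your argument is correct and in fact yields the stronger conclusion $\phi(\overline{\Omega})\subset\subset\Omega$, so that $b\Omega\cap b\phi(\overline{\Omega})=\emptyset$. The route, however, is genuinely different from the paper's. You invoke the global Fefferman--H\"ormander diagonal asymptotic $K(z,z)\asymp d(z,b\Omega)^{-(n+1)}$ to obtain the uniform angular-derivative condition (this is essentially the computation the paper uses later in Corollary~\ref{cormain}), and then finish with a Lipschitz/normal-approach estimate. The paper instead argues pointwise without that asymptotic: assuming $p\in b\Omega\cap b\phi(\overline{\Omega})$ and $J_\phi(p)\neq 0$, it applies the inverse function theorem near $p$, transplants the explicit weakly null sequence $g_j$ (the same peaking functions built in the proof of Theorem~\ref{analytic disc}) through the local inverse, and obtains a uniform lower bound on $\|C_\phi g_j\|$, contradicting compactness. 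This only gives $J_\phi=0$ on $b\Omega\cap b\phi(\overline{\Omega})$, and a second step is then needed: the zero set of the holomorphic $J_\phi$ cannot contain an open piece of $b\Omega$ without producing an analytic disc in $b\Omega$, impossible for a strongly convex domain. Your approach is shorter and proves more, at the cost of importing a deep external estimate; the paper's Jacobian argument is more self-contained and, as the paper exploits immediately afterward, transfers verbatim to show that proper $C^1$ self-maps never give compact $C_\phi$, a setting where your relatively-compact-image conclusion obviously cannot hold.
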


If we assume that the domain is smooth, bounded, convex, and with a certain boundary condition controlling weakly pseudoconvex points, we can get a generalization of one implication of \cite{zhu}.  Recall \cite{zhu} showed that compactness of $C_{\phi}$ implies the angular derivative does not exist at any boundary point of a bounded strongly pseudoconvex domain (which has no non-trivial analytic discs in its boundary).  Furthermore, \cite{zhu} showed the non-existence of the angular derivative of $\phi$ at any boundary point of a bounded strongly pseudoconvex domain is a necessary and sufficent condition for compactness of $C_{\phi}$.  The following corollary is consequence of Theorem \ref{analytic disc}.  Notice we allow the existence of non-trivial analytic discs in the following corollary.

\begin{corollary}\label{cormain}
Let $\Omega\subset \mathbb{C}^n$ be a smooth bounded convex domain satisfying the Levi-flat disc condition.   Let $\phi:\Omega\rightarrow \Omega$ be a holomorphic self-map, and $\phi\in C(\overline{\Omega})$.  Suppose $C_{\phi}$ is compact on $A^2(\Omega)$, and $p\in b\Omega$ be a strongly pseudoconvex point.  Then the angular derivative does not exist at $p$.  That is,
\[  \lim_{\zeta\in \Omega\,\,,\zeta\rightarrow p}\frac{d(\zeta,b\Omega)}{d(\phi(\zeta),b\Omega)}=0.\]  

\end{corollary}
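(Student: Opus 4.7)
The plan is to bootstrap from Theorem \ref{analytic disc} using the classical Bergman-kernel estimate at strongly pseudoconvex boundary points. By Lemma \ref{flat}, the Levi-flat disc condition implies the limit point disc condition, so Theorem \ref{analytic disc} applies and gives $d(\phi(b\Omega),\overline{\mathcal{L}})>0$. Since $\phi\in C(\overline{\Omega})$ and $p\in b\Omega$, this already forces $\phi(p)\in\overline{\Omega}$ with $\phi(p)\notin\overline{\mathcal{L}}$. The Levi-flat disc condition, combined with the continuity-of-eigenvalues argument used in the proof of Lemma \ref{flat}, yields the equality $\overline{\mathcal{L}}=\mathcal{F}$, so $\phi(p)$ is either an interior point of $\Omega$ or a strongly pseudoconvex boundary point.

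If $\phi(p)\in\Omega$, then $d(\phi(\zeta),b\Omega)$ is bounded below on a neighborhood of $p$ while $d(\zeta,b\Omega)\to 0$, and the desired limit is immediate. Otherwise $\phi(p)\in b\Omega$ is strongly pseudoconvex, as is $p$ by hypothesis. In that case I would run the familiar argument through the adjoint: from $C_{\phi}^{*}K_{z}^{\Omega}=K_{\phi(z)}^{\Omega}$ one computes
\[\|C_{\phi}^{*}k_{\zeta}^{\Omega}\|^{2}=\frac{K^{\Omega}(\phi(\zeta),\phi(\zeta))}{K^{\Omega}(\zeta,\zeta)}.\]
Compactness of $C_{\phi}$ transfers to $C_{\phi}^{*}$, and Proposition \ref{propkernel} gives $k_{\zeta}^{\Omega}\to 0$ weakly in $A^{2}(\Omega)$ as $\zeta\to p$, hence $\|C_{\phi}^{*}k_{\zeta}^{\Omega}\|\to 0$. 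Plugging in the classical local estimate $K^{\Omega}(z,z)\asymp d(z,b\Omega)^{-(n+1)}$ valid near any strongly pseudoconvex boundary point converts the kernel ratio into $(d(\zeta,b\Omega)/d(\phi(\zeta),b\Omega))^{n+1}\to 0$, which is the desired non-existence of the angular derivative.

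The serious geometric input has already been supplied by Theorem \ref{analytic disc}, so there is no genuinely hard step left; the only extra analytic ingredient is the two-sided local kernel estimate at strongly pseudoconvex points, which is classical and holds at any such point of a smooth bounded pseudoconvex domain. What does require a little care is the preliminary reduction in the first paragraph: one must ensure that $\phi(\zeta)$ approaches either an interior point or a strongly pseudoconvex boundary point of $\Omega$, so that the kernel estimate is applicable at $\phi(\zeta)$ and not merely at $\zeta$. This is exactly the role of Theorem \ref{analytic disc} combined with the Levi-flat disc condition, which keeps the boundary image of $\phi$ uniformly away from the Levi-flat locus.
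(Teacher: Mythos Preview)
Your proposal is correct and follows essentially the same route as the paper: invoke Lemma~\ref{flat} and Theorem~\ref{analytic disc} to reduce to the case where $\phi(p)$ is either interior or a strongly pseudoconvex boundary point, then use $\|C_{\phi}^{*}k_{\zeta}\|^{2}=K(\phi(\zeta),\phi(\zeta))/K(\zeta,\zeta)$ together with the local diagonal kernel estimate $K(z,z)\asymp d(z,b\Omega)^{-(n+1)}$ and Proposition~\ref{propkernel}. Your write-up is in fact more explicit than the paper's about the interior case and about why $\phi(p)\notin\overline{\mathcal{L}}$ forces $\phi(p)$ to be strongly pseudoconvex (via $\overline{\mathcal{L}}=\mathcal{F}$), but the argument is the same.
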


\begin{proof}
From Theorem \ref{analytic disc} and Lemma \ref{flat}, we may assume $\phi(p)\in b\Omega$ and the boundary of $\Omega$ near $\phi(p)$ is strongly pseudoconvex.  Then one can show that, for $z$ near $p$,
\[\|C^*_{\phi}(k_z)\|^2=K(z,z)^{-1}K(\phi(z),\phi(z))\approx \frac{d(z,b\Omega)^{n+1}}{d(\phi(z),b\Omega)^{n+1}}.\]  Since $C^*_{\phi}$ is compact and $k_z\rightarrow 0$ weakly as $z\rightarrow p$ (refer to Proposition \ref{propkernel}), we have 
\[  \lim_{\zeta\in \Omega\,\,,\zeta\rightarrow p}\frac{d(\zeta,b\Omega)}{d(\phi(\zeta),b\Omega)}=0.\]

\end{proof}

There are bounded convex domains with the Levi-flat disc condition (namely bounded strongly convex domains) where the converse of Corollary \ref{cormain} holds.  However, it remains an open question in general.  The following is a corollary of Theorem \ref{analytic disc} and Proposition \ref{necessary} and concerns the $n$-product of discs.

\begin{corollary}\label{poly}
Let $\mathbb{D}^n$ be the polydisc in $\mathbb{C}^n$.  Suppose $\phi\in C(\overline{\Omega})$ and $\phi:\mathbb{D}^n\rightarrow \mathbb{D}^n$ is a holomorphic self map.  Then $C_{\phi}$ is compact if and only if $\phi(\mathbb{D}^n)$ is relatively compact in $\mathbb{D}^n$.
\end{corollary}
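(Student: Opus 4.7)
The plan is to combine Theorem \ref{analytic disc}, specialized to $\Omega=\mathbb{D}^n$, with Proposition \ref{necessary} for the easier sufficient direction, so the bulk of the work is to verify that the polydisc meets the hypotheses of Theorem \ref{analytic disc} and to translate its conclusion into the stated relative compactness statement.

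My first step is to identify $\overline{\mathcal{L}}$ for the polydisc and check that $\mathbb{D}^n$ satisfies the limit point disc condition. Given $p=(p_1,\ldots,p_n)\in b\mathbb{D}^n$, at least one coordinate $p_j$ satisfies $|p_j|=1$. For any $k\ne j$ the map $F(w)=(p_1,\ldots,p_{k-1},w,p_{k+1},\ldots,p_n)$ is a non-constant holomorphic map $\mathbb{D}\to b\mathbb{D}^n$ whose image has closure containing $p$; if $p_k\in\mathbb{D}$ then $p\in F(\mathbb{D})$, while if $|p_k|=1$ then $p$ is a boundary limit of $F$ as $w\to p_k$. In particular every $p\in b\mathbb{D}^n$, including points of the distinguished boundary $T^n$, lies in $\overline{\mathcal{L}}$, so $\overline{\mathcal{L}}=b\mathbb{D}^n$ and the limit point disc condition holds trivially.

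Having checked the hypotheses, if $C_\phi$ is compact then Theorem \ref{analytic disc} supplies $\eta:=d(\phi(b\mathbb{D}^n),b\mathbb{D}^n)>0$. I would then upgrade this boundary statement to a genuine separation from the boundary on all of $\overline{\mathbb{D}^n}$: by uniform continuity of $\phi$ on the compact set $\overline{\mathbb{D}^n}$, there exists $\delta>0$ such that any $z$ with $d(z,b\mathbb{D}^n)<\delta$ satisfies $d(\phi(z),b\mathbb{D}^n)\ge\eta/2$, while the complementary compact set $\{z\in\overline{\mathbb{D}^n}:d(z,b\mathbb{D}^n)\ge\delta\}$ is mapped by the continuous $\phi$ into a compact subset of $\mathbb{D}^n$. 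Combining these, $\phi(\overline{\mathbb{D}^n})$ is compactly contained in $\mathbb{D}^n$, which is exactly the relative compactness of $\phi(\mathbb{D}^n)$ in $\mathbb{D}^n$.

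For the converse, I would appeal directly to Proposition \ref{necessary}. Its content ought to be the standard Bergman-space fact that if $\phi(\mathbb{D}^n)$ is relatively compact in $\mathbb{D}^n$ and $f_k\to 0$ weakly in $A^2(\mathbb{D}^n)$, then $f_k\to 0$ locally uniformly on $\mathbb{D}^n$, hence uniformly on the compact set $\overline{\phi(\mathbb{D}^n)}$, so that $C_\phi f_k=f_k\circ\phi\to 0$ uniformly on $\mathbb{D}^n$ and therefore in $A^2$-norm. I do not foresee a genuine obstacle: the substantive analytic input has already been placed in Theorem \ref{analytic disc}, and the only mild care needed is the combinatorial verification at distinguished boundary points where multiple coordinates have modulus one, resolved by the choice of varying coordinate above.
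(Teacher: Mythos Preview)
Your proposal is correct and follows exactly the route the paper indicates: the paper offers no separate proof but simply attributes the corollary to Theorem \ref{analytic disc} and Proposition \ref{necessary}, and you have filled in precisely those details. The key step---verifying that every boundary point of $\mathbb{D}^n$ lies in the closure of a non-trivial analytic disc in $b\mathbb{D}^n$, so that $\overline{\mathcal{L}}=b\mathbb{D}^n$ and the limit point disc condition holds---is done correctly, including at distinguished boundary points; one small simplification for step 3 is that once $d(\phi(b\mathbb{D}^n),b\mathbb{D}^n)>0$ you already have $\phi(b\mathbb{D}^n)\subset\mathbb{D}^n$, hence the compact set $\phi(\overline{\mathbb{D}^n})=\phi(\mathbb{D}^n)\cup\phi(b\mathbb{D}^n)\subset\mathbb{D}^n$, making the uniform continuity argument unnecessary.
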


\section{Proof of Main Results}

First we show that compactness of composition operators is invariant under biholomorphisms of the domain $\Omega$.

\begin{lemma}\label{lem2}
Let $\Omega_1,\Omega_2\subset \mathbb{C}^n$ for $n\geq 2$ be bounded pseudoconvex domains.  Furthermore, assume there exists a biholomorphism $B:\Omega_1\rightarrow \Omega_2$ so that $B\in C^1(\overline{\Omega_1})$.  Suppose $\phi:=(\phi_1,\phi_2,...,\phi_n):\Omega_2\rightarrow \Omega_2$ is such that the composition operator $C_{\phi}$ is compact on $A^2(\Omega_2)$.  Then, $C_{B^{-1}\circ \phi\circ B}$ is compact on $A^2(\Omega_1)$.  
\end{lemma}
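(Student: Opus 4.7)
The plan is to realize $C_{B^{-1}\circ\phi\circ B}$ as a three-fold operator composition on the relevant Bergman spaces and then invoke the general fact that a compact operator sandwiched between bounded operators is compact. A direct pointwise computation gives the operator identity
\[
C_{B^{-1}\circ\phi\circ B} \;=\; C_B\,\circ\,C_\phi\,\circ\,C_{B^{-1}},
\]
where $C_{B^{-1}}(g)(w):=g(B^{-1}(w))$ is to be read as an operator $A^2(\Omega_1)\to A^2(\Omega_2)$ and $C_B(f)(z):=f(B(z))$ as an operator $A^2(\Omega_2)\to A^2(\Omega_1)$. Since $C_\phi$ is assumed compact on $A^2(\Omega_2)$, once $C_B$ and $C_{B^{-1}}$ are both shown to be bounded between the indicated spaces, the desired compactness of $C_{B^{-1}\circ\phi\circ B}$ follows immediately from the ideal property of the compact operators.

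The boundedness of $C_{B^{-1}}$ is the easy half and is the first thing I would write down. By the holomorphic change of variables $w=B(z)$, which yields $dV(w)=|J_B(z)|^2\,dV(z)$, we obtain
\[
\|g\circ B^{-1}\|_{A^2(\Omega_2)}^{2}
\;=\;\int_{\Omega_1} |g(z)|^{2}\,|J_B(z)|^{2}\,dV(z)
\;\leq\; \|J_B\|_{L^\infty(\overline{\Omega_1})}^{2}\,\|g\|_{A^2(\Omega_1)}^{2},
\]
and the supremum of $|J_B|$ over $\overline{\Omega_1}$ is finite because $B\in C^1(\overline{\Omega_1})$ makes $J_B$ a continuous function on the compact set $\overline{\Omega_1}$. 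An analogous change of variables $w=B(z)$ for $f\in A^2(\Omega_2)$ gives
\[
\|f\circ B\|_{A^2(\Omega_1)}^{2}
\;=\;\int_{\Omega_2} |f(w)|^{2}\,|J_{B^{-1}}(w)|^{2}\,dV(w),
\]
so the boundedness of $C_B$ reduces to establishing an $L^\infty$-bound on $J_{B^{-1}}$ over $\Omega_2$.

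The main obstacle is precisely this last point: one must argue that $|J_B|$ stays bounded below on $\Omega_1$, or equivalently that $J_{B^{-1}}$ is bounded on $\Omega_2$. Interior-wise this is free, since $B$ is a biholomorphism and therefore $J_B$ is a nowhere-vanishing holomorphic function on $\Omega_1$; the real content is controlling the behaviour of $J_B$ near $b\Omega_1$. The natural reading of the hypothesis is that $B$ furnishes a $C^1$-diffeomorphism of $\overline{\Omega_1}$ onto $\overline{\Omega_2}$, in which case $B^{-1}\in C^1(\overline{\Omega_2})$ and $J_{B^{-1}}$ is continuous and hence bounded on the compact set $\overline{\Omega_2}$; I would make this interpretation explicit at the start of the argument. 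With that in hand, $C_B$ is bounded, and the sandwiched composition $C_B\circ C_\phi\circ C_{B^{-1}}=C_{B^{-1}\circ\phi\circ B}$ is compact on $A^2(\Omega_1)$.
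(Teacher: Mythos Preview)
Your argument is correct. The route differs from the paper's in organization rather than content: you factor $C_{B^{-1}\circ\phi\circ B}=C_B\circ C_\phi\circ C_{B^{-1}}$ and invoke the two-sided ideal property of compact operators, whereas the paper takes a weakly null sequence $g_j$ in $A^2(\Omega_1)$, passes to $h_j:=g_j\circ B^{-1}$, checks directly (via the characterization ``bounded in norm and locally uniformly convergent to $0$'') that $h_j\to 0$ weakly in $A^2(\Omega_2)$, and then bounds $\|C_{B^{-1}\circ\phi\circ B}g_j\|_{2,\Omega_1}^2$ by $\sup_{\Omega_2}|J_{B^{-1}}|^2\,\|C_\phi h_j\|_{2,\Omega_2}^2$. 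Unwound, the paper's two steps are exactly your two boundedness claims for $C_{B^{-1}}$ and $C_B$, so the underlying change-of-variables estimates are identical; your packaging is simply more operator-theoretic and reusable.

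One point worth noting: the issue you flag about needing $|J_{B^{-1}}|$ bounded on $\Omega_2$ is present in the paper's proof as well---the bound $\sup\{|J_{B^{-1}}(z)|^2:z\in\Omega_2\}$ is used without comment. Your explicit reading of the hypothesis (that $B$ extends to a $C^1$ diffeomorphism $\overline{\Omega_1}\to\overline{\Omega_2}$, so $B^{-1}\in C^1(\overline{\Omega_2})$) is the intended one and should indeed be stated; it is not a gap in your argument relative to the paper's, but rather a clarification both proofs require.
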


\begin{proof}
Let $g_j\in A^2(\Omega_1)$ so that $g_j\rightarrow 0$ weakly as $j\rightarrow \infty$.  We will use the fact that $g_j\rightarrow 0$ weakly in $A^2(\Omega_1)$ as $j\rightarrow \infty$ if and only if $\|g_j\|$ is a bounded sequence in $j$ and $g_j\rightarrow 0$ uniformly on compact subsets of $\Omega_1$.  This fact appears as \cite[lemma 3.5]{cuckruhanseveral}. Therefore, $\|g_j\|$ is uniformly bounded in $j$ and $g_j\rightarrow 0$ uniformly on compact subsets of $\Omega_1$.
Then define $h_j:=g_j\circ B^{-1}\in  A^2(\Omega_2)$.  Then using a change of coordinates, one can show $\|h_j\|$ is uniformly bounded in $j$ and $h_j\rightarrow 0$ uniformly on compact subsets of $\Omega_2$.  Therefore, by \cite[lemma 3.5]{cuckruhanseveral}, $h_j\rightarrow 0$ weakly as $j\rightarrow \infty$.  
Then we have,
\begin{align*}
&\|C_{B^{-1}\circ \phi\circ B}(g_j)\|_{2,\Omega_1}^2\\
&=\|h_j\circ \phi\circ B\|_{2,\Omega_1}^2\\
&\leq \sup\{|J_{B^{-1}}(z)|^2:z\in \Omega_2\}\|C_{\phi}(h_j)\|_{2,\Omega_2}^2
\end{align*}

This shows that $C_{B^{-1}\circ\phi\circ B}$ is compact on $A^2(\Omega_1)$.

\end{proof}

\begin{proposition}\label{necessary}

Let $\Omega\subset \mathbb{C}^n$ be a bounded pseudoconvex domain.  Suppose $\phi$ is a holomorphic self-map on $\Omega$ so that $\overline{\phi(\Omega)}\subset \Omega$.  Then $C_{\phi}$ is compact on $A^2(\Omega)$.  

\end{proposition}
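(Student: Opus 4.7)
The plan is to prove compactness via the standard sequential characterization on Hilbert spaces: show that if $\{g_j\}\subset A^2(\Omega)$ converges weakly to $0$, then $\|C_\phi g_j\|_{2,\Omega}\to 0$. I would use exactly the criterion that appeared in the proof of Lemma \ref{lem2}, namely \cite[Lemma 3.5]{cuckruhanseveral}, which says that weak convergence to $0$ in $A^2(\Omega)$ is equivalent to uniform boundedness of the norms together with uniform convergence to $0$ on compact subsets of $\Omega$.

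With that in hand, the argument is essentially one line. Set $K:=\overline{\phi(\Omega)}$, which by hypothesis is a compact subset of $\Omega$. Given a weakly null sequence $\{g_j\}$, the criterion above guarantees $\sup_{w\in K}|g_j(w)|\to 0$. Since $\phi(z)\in K$ for every $z\in\Omega$, we obtain the pointwise (in fact uniform in $z$) bound $|g_j(\phi(z))|\leq \sup_{w\in K}|g_j(w)|$, and hence
\[
\|C_\phi g_j\|_{2,\Omega}^2=\int_\Omega |g_j(\phi(z))|^2\,dV(z)\leq V(\Omega)\sup_{w\in K}|g_j(w)|^2\longrightarrow 0.
\]
This is precisely the Hilbert-space sequential criterion for compactness of $C_\phi$.

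Before running this, I would check that $C_\phi$ is actually bounded on $A^2(\Omega)$ in the first place; this is immediate from the same estimate applied to a single function, since the Bergman reproducing property (or the submean-value inequality on balls slightly larger than $K$ but still inside $\Omega$) gives a constant $C_K$ with $\sup_K|f|\leq C_K\|f\|_{2,\Omega}$ for every $f\in A^2(\Omega)$, so $\|C_\phi f\|_{2,\Omega}^2\leq V(\Omega)\,C_K^2\,\|f\|_{2,\Omega}^2$.

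There is no real obstacle in this proposition; the whole content is the Montel-type observation that $A^2$-bounded sequences form normal families on compact subsets, combined with the crucial hypothesis $\overline{\phi(\Omega)}\subset\Omega$ which pushes the image into the interior so that compactness of $K$ can be exploited. If anything warrants care, it is only invoking the right equivalent form of weak convergence (boundedness plus locally uniform convergence) rather than working directly with test functionals.
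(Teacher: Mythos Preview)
Your proof is correct and follows essentially the same approach as the paper: both use \cite[Lemma~3.5]{cuckruhanseveral} to convert weak convergence into uniform convergence on the compact set $K=\overline{\phi(\Omega)}$, and then bound $\|C_\phi g_j\|^2$ by the sup-norm of $g_j$ on $K$ times a fixed volume factor. The only cosmetic difference is that the paper packages the estimate via the pullback measure $V_\phi=V\circ\phi^{-1}$ (writing $\|C_\phi g_j\|^2=\|g_j\|_{L^2(\overline{M},V_\phi)}^2\le \|g_j\|_{L^\infty(\overline{M})}^2 V_\phi(\overline{M})$), whereas you bound directly by $V(\Omega)\sup_K|g_j|^2$; these are the same inequality.
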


\begin{proof}
We let $V_{\phi}:=V\circ \phi^{-1}$ be the pullback measure associated with $\phi$, where $V$ is the Lebesgue volume measure on $\Omega$.  Since $\phi(\Omega)$ is relatively compact in $\Omega$, the support of $V_{\phi}$ is relatively compact in $\Omega$.  Let $\{g_j\}_{j\in \mathbb{N}}\subset A^2(\Omega)$ so that $g_j\rightarrow 0$ weakly as $j\rightarrow \infty$.  Thus $g_j\rightarrow 0$ uniformly on compact subsets of $\Omega$ as $j\rightarrow \infty$ (see \cite[lemma 3.5]{cuckruhanseveral}).  Since $V_{\phi}$ has compact support, there exists $M$ relatively compact in $\Omega$ so that 
\[\|C_{\phi}g_j\|^2_{L^2(\Omega, V)}=\|g_j\|^2_{L^2(\overline{M}, V_{\phi})}\leq \|g_j\|^2_{L^{\infty}(\overline{M})}V_{\phi}(\overline{M})\rightarrow 0\] as $j\rightarrow \infty$.

\end{proof}

\begin{proof}[Proof of Theorem \ref{analytic disc}]
Let us assume $C_{\phi}$ is compact on $A^2(\Omega)$ and $\phi^{-1}( b\Omega)\neq \emptyset$ (otherwise $d(\phi(b\Omega),\overline{\mathcal{L}})>0$ is vacuously true).  Let $p\in \phi^{-1}( b\Omega)$.
The first claim is that $\phi(p)$ cannot be contained in the closure of a non-trivial analytic disc in $b\Omega$.  For the sake of obtaining a contradiction, suppose $\phi(p)\in \overline{\mathcal{L}}\subset b\Omega$.  By the convexity of $\Omega$, we may assume $\Omega\subset \{Re(z_n)>0\}$.  Since $\Omega$ satisfies the limit point disc condition, we may assume there exists a non-trivial analytic disc $\Delta\subset b\Omega$ and $\phi(p)\in \overline{\Delta}$.  Then, by Lemma \ref{affine}, $\Delta\subset L$ for some complex line $L$.  Let $p_j\in \Omega$ so that $p_j\rightarrow p$ as $j\rightarrow \infty$.  Then we construct $g_j$ in a similar way to \cite[Proof of Theorem 2]{ClosSahutoglu}.  That is, by rotating and translating the domain and appealing to Lemma \ref{lem2}, we may assume $\phi(p)=(0,0,0,...,0)$, $\Omega\subset \{Re(z_n)>0\}$ and $L\subset \{Re(z_n)=0\}$.  We define
\[g_j(z_1,...,z_n):=\frac{|\phi_n(p_j)|^{k(n)}}{d(p_j,b\Omega)^{n}\left(z_n+\delta_j\right)^{k(n)}}.\] The condition that $\overline{\Omega}\subset \{Re(z_n)\geq 0\}$ implies $g_j$ is continuous on $\overline{\Omega}$ for all $j\in \mathbb{N}$.  Also, chose $k(n)\in \mathbb{N}$ is sufficiently large so that $g_j\rightarrow 0$ weakly in $A^2(\Omega)$ as $j\rightarrow \infty$.  One can see this via a continuity argument since one can chose $k(n)$ and $\delta_j>0$ so that $\{\|g_j\|_{L^2(\Omega)}\}_{j\in \mathbb{N}}$ is bounded and $g_j\rightarrow 0$ uniformly on compact subsets of $\Omega$.

We let 
\[B_j:=\{z\in \mathbb{C}^n: d(z,p_j)<\frac{1}{2}d(p_j,b\Omega) \}\subset \Omega\] and apply the mean value theorem for holomorphic functions.  We have,
\begin{align*}
&\int_{\Omega}|g_j\circ \phi(z)|^2dV(z)\\
&\geq\int_{B_j}|g_j\circ \phi(z)|^2dV(z)\\
&=\int_{B_j}\left|\frac{|\phi_n(p_j)|^{k(n)}}{d(p_j,b\Omega)^{n}\left(\phi_n(z)+\delta_j\right)^{k(n)}}\right|^2dV(z)\\
&\geq vol(B_j)\left|\frac{|\phi_n(p_j)|^{k(n)}}{d(p_j,b\Omega)^{n}\left(\phi_n(p_j)+\delta_j\right)^{k(n)}}\right|^2\\
&\gtrsim C
\end{align*}
For some $C>0$.  This contradicts the compactness of $C_{\phi}$.  Therefore, $\phi(p)$ cannot be contained in the closure of a non-trivial analytic disc.      
\end{proof}

\begin{proof}[Proof of Theorem \ref{main}]
We will show that if $p\in b\Omega\cap b\phi(\overline{\Omega})$, then $J_{\phi}(p)=0$ and use our assumption that $\Omega$ is strongly convex and hence has no non-trivial analytic discs in its boundary.  Assume that  $p\in b\Omega\cap b\phi(\overline{\Omega})$ and $J_{\phi}(p)\neq 0$.  Thus by the inverse function theorem, $\phi$ has a local inverse $\psi:\phi(U)\rightarrow U$ for some open set $U$ in $\mathbb{C}^n$ containing $p$.  Without loss of generality and appealing to Lemma \ref{lem2}, by rotating and translating the domain, we may assume that $\phi(p)=(0,0,...,0)$.  Let $g_j$ be defined as in the proof of Theorem \ref{analytic disc}.  One can show that for any open set $V$ containing $(0,...,0)$, there exists $\delta_V>0$ so that $\|g_j\|^2_{L^2(V\cap \Omega)}\geq \delta_V$ for all $j\in \mathbb{N}$.
Then we have
\[ \|C_{\phi}(g_j)\|^2_{L^2(U\cap \Omega)}=\||J_{\psi}|^{\frac{1}{2}}g_j\|^2_{L^2(\phi(U\cap \Omega))}=\||J_{\psi}|^{\frac{1}{2}}g_j\|^2_{L^2(V\cap \Omega)}\geq M_V>0\] for some open set $V$, some $M_V>0$, and for all $j\in \mathbb{N}$.  This is a contradiction.  Therefore, $J_{\phi}(p)=0$.  Now  $\overline{b\Omega\cap b\phi(\overline{\Omega})}$ must have empty interior in $b\Omega$ since $\overline{b\Omega\cap b\phi(\overline{\Omega})}$ contains the zero set of the Jacobian of $\phi$, a holomorphic function.  If $\overline{b\Omega\cap b\phi(\overline{\Omega})}$ has non-empty interior in $b\Omega$, then one can show that $b\Omega$ contains a non-trivial analytic disc.  This cannot occur since our domain is strongly pseudoconvex.
\end{proof}

This proof shows that the converse of Theorem \ref{analytic disc} is false.  Let $\Omega\subset \mathbb{C}^2$ be a smoothly bounded convex Reinhardt domain so that $\mathcal{L}\neq \emptyset$ but with no 'vertical'  non-trivial analytic discs (see \cite{SC} and \cite{closdis}).  Furthermore, assume $\Omega$ has the Levi-flat disc condition, $\mathcal{L}\subset \mathbb{C}\times \{|z_2|=1\}$, and $(1,0)\in b\Omega$.  Define $\phi(z_1,z_2):=(z_1, \frac{1}{2}z_2)$.  Since $\Omega$ is convex and Reinhardt, it is a complete Reinhardt domain (see \cite{SC} and \cite{closdis}).  Therefore, $\phi$ is a holomorphic self-map on $\Omega$ and $\overline{\phi(\Omega)}\cap \overline{\mathcal{L}}=\emptyset$ (so satisfies the assumptions of the converse of Theorem \ref{analytic disc}).  However, it is clear the $C_{\phi}$ is not compact, since $J_{\phi}$ is non-vanishing at $(1,0)$ and $\phi(1,0)=(1,0)$.  This next example will show the converse of Theorem \ref{main} is not true in general.  Let $\Omega=\mathbb{B}^2$ be the unit ball in $\mathbb{C}^2$, and let $\phi(z_1,z_2):=(z_1,\frac{1}{2}z_2)$ as before.  The claim is that $b\Omega\cap b\phi(\overline{\Omega})$ is nowhere dense in $b\Omega$ but $C_{\phi}$ is not compact on $A^2(\Omega)$, implying the converse of Theorem \ref{main} is not true.  Suppose $|z_1|^2+|z_2|^2=1$ and compute the magnitude of the image of $(z_1,z_2)$ under $\phi$.  If $|z_1|^2+\frac{1}{4}|z_2|^2=1$, then we get that $z_2=0$ and so $|z_1|=1$.  This implies $b\Omega\cap b\phi(\overline{\Omega})$ is nowhere dense in $b\Omega$.  It is clear that $C_{\phi}$ is not compact on $A^2(\Omega)$ by \cite{zhu} since the angular derivative of $\phi$ exists at $(1,0)$.  
The techniques used in the previous proof, along with the maximum principle for holomorphic functions and Sard's theorem, can be adapted to show the following.

\begin{proposition}

Let $\Omega\subset \mathbb{C}^n$ be  a smoothly bounded pseudoconvex domain.  Let $\phi:\Omega\rightarrow \Omega$ be a proper holomorphic map where $\phi\in C^1(\overline{\Omega} )$.  Then $C_{\phi}$ is not compact on $A^2(\Omega)$.

\end{proposition}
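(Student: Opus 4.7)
The plan is to argue by contradiction in the spirit of Theorem \ref{main}: I would produce a boundary point $q\in b\Omega$ at which $\phi$ is a local biholomorphism, and then show that a weakly null sequence in $A^2(\Omega)$ whose $L^2$-mass concentrates near $\phi(q)\in b\Omega$ cannot be sent to zero by $C_\phi$. The two new ingredients beyond Theorem \ref{main} are the maximum principle, to locate such a $q$, and Sard's theorem, to ensure the preimage structure at a regular boundary value is controlled.

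Since $\phi$ is proper and holomorphic between equidimensional domains, $\phi$ is a finite branched covering with $\phi(\Omega)=\Omega$; combined with $\phi\in C^1(\overline{\Omega})$ this forces the continuous extension to satisfy $\phi(b\Omega)=b\Omega$ and $J_\phi\not\equiv 0$ in $\Omega$. The Jacobian $J_\phi$ is holomorphic on $\Omega$ and continuous on $\overline{\Omega}$, so the maximum principle rules out $J_\phi\equiv 0$ on $b\Omega$ (otherwise $|J_\phi|\le\max_{b\Omega}|J_\phi|=0$ throughout $\overline{\Omega}$). Next, Sard's theorem applied to the $C^1$-map $\phi|_{b\Omega}\colon b\Omega\to b\Omega$ of real $(2n-1)$-manifolds yields a dense set of regular values in $b\Omega$. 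At any regular point $q$ of $\phi|_{b\Omega}$ the real differential $d\phi_q$ has rank at least $2n-1$; since $d\phi_q$ is complex linear its real rank is even, hence equal to $2n$, so $J_\phi(q)\ne 0$. Fixing a regular value $p\in b\Omega$ and a preimage $q\in\phi^{-1}(p)\cap b\Omega$ (nonempty since $\phi(b\Omega)=b\Omega$), the inverse function theorem gives a biholomorphism $\phi\colon U\cap\Omega\to V\cap\Omega$ with inverse $\psi$, for small neighborhoods $U,V$ of $q,p$.

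For the contradiction I would normalize $p=0$ via Lemma \ref{lem2} (after rotation and translation so the outward normal at $p$ lies along the positive $\mathrm{Re}(z_n)$-axis) and construct, along the lines of Theorem \ref{analytic disc}, a weakly null sequence $g_j\in A^2(\Omega)$ satisfying $\|g_j\|^2_{L^2(V\cap\Omega)}\ge\delta_V>0$ for all $j$. Change of variables on $U\cap\Omega$ then gives
\[
\|C_\phi g_j\|^2_{A^2(\Omega)}\ge\int_{U\cap\Omega}|g_j\circ\phi(z)|^2\,dV(z)=\int_{V\cap\Omega}|g_j(w)|^2|J_\psi(w)|^2\,dV(w)\ge c\,\delta_V>0,
\]
with $c=\inf_{V}|J_\psi|^2>0$, contradicting compactness of $C_\phi$ against the weakly null sequence $g_j$.

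The main obstacle is the construction of $g_j$ in the pseudoconvex setting. The explicit formula $g_j(z)=|\phi_n(p_j)|^{k(n)}\,d(p_j,b\Omega)^{-n}(z_n+\delta_j)^{-k(n)}$ from Theorem \ref{analytic disc} required the global half-space inclusion $\Omega\subset\{\mathrm{Re}(z_n)>0\}$, which fails in general for a pseudoconvex domain. Since smoothness of $b\Omega$ still provides such an inclusion on a neighborhood of $p$, the adaptation I would pursue replaces the singular factor $(z_n+\delta_j)^{-k(n)}$ by an object holomorphic throughout $\Omega$ concentrating $L^2$-mass near $p$, for example the normalized Bergman kernels $K(\cdot,p_j)/\sqrt{K(p_j,p_j)}$ (weakly null by Proposition \ref{propkernel}) or a local H\"ormander-type holomorphic support function extended to $\Omega$.
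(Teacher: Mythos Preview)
Your argument and the paper's are contrapositives built on the same core step. The paper assumes $C_\phi$ is compact, invokes the local-inverse argument from the proof of Theorem~\ref{main} at every $p\in b\Omega$ (using $\phi(b\Omega)=b\Omega$) to force $J_\phi\equiv 0$ on $b\Omega$, then applies the maximum principle to get $J_\phi\equiv 0$ on all of $\Omega$, and finally contradicts Sard's theorem for the interior map $\phi:\Omega\to\Omega$ (the image of the critical set would be all of $\Omega$, hence of positive measure). You instead locate a single $q\in b\Omega$ with $J_\phi(q)\neq 0$ and run the local-inverse estimate there. Note that your detour through Sard for $\phi|_{b\Omega}$ and the even-rank trick is unnecessary: once the maximum principle gives $J_\phi\not\equiv 0$ on $b\Omega$, you already have such a $q$. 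So the two proofs deploy the maximum principle and Sard in reversed roles---the paper uses Sard on the full map to close the contradiction, while you use it (redundantly) on the boundary restriction to find a good point.

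Both routes rest on the same key ingredient: a boundary point $q$ with $\phi(q)\in b\Omega$ and $J_\phi(q)\neq 0$ is incompatible with compactness of $C_\phi$. You are right that the explicit sequence $g_j$ from Theorems~\ref{analytic disc} and~\ref{main} relies on the global half-space inclusion coming from convexity, which is unavailable for a merely pseudoconvex $\Omega$; the paper simply cites ``the proof of Theorem~\ref{main}'' without spelling out the adaptation. Your proposed replacement by the normalized Bergman kernels $k_{p_j}$ is the natural repair in this setting, since Proposition~\ref{propkernel} already supplies weak convergence to zero on any smooth bounded pseudoconvex domain.
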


\begin{proof}
Assume $\phi$ is a proper holomorphic map on $\Omega$, $\phi$ is of class $C^1(\overline{\Omega})$, and $C_{\phi}$ is compact on $A^2(\Omega)$. Now if $\phi$ is proper, then $\phi:\overline{\Omega}\rightarrow \overline{\Omega}$ is surjective and $\phi(b\Omega)=b\Omega$.  Therefore, by the proof of Theorem \ref{main}, $J_{\phi}=0$ on $b\Omega$.  Since $J_{\phi}$ is holomorphic on $\Omega$ and continuous up to $\overline{\Omega}$, $J_{\phi}=0$ on $\Omega$.  This contradicts Sard's theorem (see \cite{sard}), since the image of the zero set of the Jacobian has positive volume measure.

\end{proof}

It would be interesting say whether these techniques generalize to symbols of less regularity.  Also, one can try to relate compactness of composition operators to angular derivatives (see \cite{zhu}) of symbols on more general bounded convex and bounded pseudoconvex domains.

\section{Aknowlegments}
I wish to thank S\"{o}nmez \c{S}ahuto\u{g}lu, Akaki Tikaradze, and Trieu Le for useful conversations and comments on a preliminary version of this manuscript.  I also thank the anonymous referees for their useful suggestions.

\bibliographystyle{amsalpha}
\bibliography{refscomp}

\end{document}